\newlength{\defbaselineskip} \setlength{\defbaselineskip}{\baselineskip}
\theoremstyle{plain}
\newtheorem{thm}{Theorem}[section]
\newtheorem{cor}[thm]{Corollary}
\newtheorem{con}[thm]{Conjecture}
\newtheorem{df}[thm]{Definition}
\newtheorem{lema}[thm]{Lemma}
\newtheorem{obs}[thm]{Proposition}
\newtheorem{exm}[thm]{Example}
\newtheorem{comp}[thm]{Computation}
\newtheorem{rem}[thm]{Remark}
\newtheorem{pr}{Algorithm}
\theoremstyle{definition} 
\theoremstyle{definition}  %
 \numberwithin{equation}{section}
\def\p{\mathbb{P}}
\def\r{\mathbb{R}}
\def\z{\mathbb{Z}}
\def\Z{\mathbb{Z}}
\def\c{\mathbb{C}}
\DeclareMathOperator{\End}{End}
\def\p{\mathbb{P}}
\def\ob{\begin{obs}}
\def\kob{\end{obs}}
\def\dow{\begin{proof}}
\def\kdow{\end{proof}}
\def\kwadrat{\hfill$\square$}
\def\tw{\begin{thm}}
\def\ktw{\end{thm}}
\def\hip{\begin{con}}
\def\khip{\end{con}}
\def\lem{\begin{lema}}
\def\klem{\end{lema}}
\def\ex{\begin{exm}}
\def\prog{\begin{pr}}
\def\kprog{\end{pr}}
\def\wn{\begin{cor}}
\def\kwn{\end{cor}}
\def\uwa{\begin{rem}}
\def\kuwa{\end{rem}}
\def\kex{\end{exm}}
\def\dfi{\begin{df}}
\def\kdfi{\end{df}}
\definecolor{zielony}{rgb}{0.5, 0.9, 0.1}
\definecolor{czerwony}{rgb}{0.9, 0.2, 0.1}
\definecolor{niebieski}{rgb}{0.3, 0.1, 0.9}
\begin{document}
\title{{Phylogenetic invariants for group-based models}}
\author{Maria Donten-Bury}
\author{Mateusz Micha\l ek}
\thanks{The first author is supported by a grant of Polish MNiSzW (N N201 611 240). The second author is supported by a grant of Polish MNiSzW (N N201 413 539).}
\keywords{phylogenetic tree, group-based model, phylogenetic invariant}
\subjclass[2010]{52B20, 13P25}
\maketitle

\begin{abstract}
In this paper we investigate properties of algebraic varieties representing group-based phylogenetic models. We propose a method of generating many phylogenetic invariants. We prove that we obtain all invariants for \emph{any} tree for the two-state Jukes-Cantor model. We conjecture that for a large class of models our method can give all phylogenetic invariants for any tree. We show that for 3-Kimura our conjecture is equivalent to the conjecture of Sturmfels and Sullivant \cite[Conjecture 2]{SS}. This, combined with the results in \cite{SS}, would make it possible to determine all phylogenetic invariants for \emph{any} tree for 3-Kimura model, and also other phylogenetic models.
Next we give the (first) examples of non-normal varieties associated to general group-based model for an abelian group. Following Kubjas \cite{kaie} we prove that for many group-based models varieties associated to trees with the same number of leaves do not have to be deformation equivalent.
\end{abstract}

\section{Introduction}

Phylogenetics is a science that tries to reconstruct the history of evolution. It is strongly connected with many branches of mathematics including algebraic geometry. To each possible history of evolution, represented by a tree, one can associate an algebraic variety, whose special points correspond to possible probability distributions on the DNA states of the living species. For a detailed introduction the reader is advised to look in \cite{PS} and for an algebraic point of view in \cite{4aut}.

From the point of view of applications, one is interested in computing phylogenetic invariants that are polynomials defining the variety. It is very hard to find them in general, however for some special models of evolution much progress has been made. In this paper we are mainly dealing with a large class of equivariant models \cite{DK} -- so called general group-based models. The varieties associated to these models have a natural torus action with a dense orbit (see \cite{Hendy1}, \cite{ES}, \cite{mateusz}). The most influential paper in this area is \cite{SS}, where the authors gave the description of the generators of the ideal, assuming that the ideal of the claw tree is known. Unfortunately not much is known\footnote{After the submission of this paper a result concerning set-theoretic generation in some bounded degree was presented in \cite{Draisma2012}.} on the ideals of the claw trees, apart from the case of the two-state Jukes-Cantor model \cite{Sonja}. In particular we do not even know if the degree in which they are generated is bounded while the number of leaves grows to infinity (Conjecture 1 and 2 in \cite{SS}). The description of
 the ideal of the claw tree is also the main missing ingredient in the description of the ideal of any tree for equivariant models \cite[p. 17]{DK}.

In this paper we propose a method of finding the ideals of the claw trees using a geometric approach -- cf. Section \ref{methodgeneration}. We conjecture that the varieties associated to large claw trees are scheme-theoretic intersections of varieties associated to trees of smaller valency.
This would enable generating the ideals recursively. We prove the conjecture for the Jukes-Cantor model \ref{conj_JC}. An interesting fact is that we can show that our conjecture is equivalent to the one made by Sturmfels and Sullivant for the 3-Kimura model, Proposition \ref{eq}.

\emph{For any general group--based model the phylogenetic invariants of degree $d$ for a claw tree with inner vertex of degree $n$ can be explicitly derived from the phylogenetic invariants for trees with vertices of degree at most $n-1$ for $d<<n$. For the details see Section \ref{phyloinv}.
}

We also investigate geometric properties of algebraic varieties representing phylogenetic models. In particular we give an example of a model associated
to an abelian group that gives a non-normal variety.

\emph{The variety associated to any group containing $\z_6$ or $\z_2\times\z_2\times\z_2$ or $\z_4\times\z_2$ or $\z_8$ and the claw tree with three leaves is not normal (Computation \ref{nienormalneobliczenia} and Proposition \ref{nienormalneind}). The variety associated to any abelian group of cardinality at most $5$ or to $\z_7$ and any trivalent tree is normal. The variety associated to the group $\z_2$ and any tree is normal (Proposition \ref{JCnormal}).}

The results on normality are strongly connected to deformation problems. It is well-known that
algebraic varieties representing trivalent trees with the same number of leaves are deformation equivalent for the binary Jukes-Cantor model. The original
geometric proof can be found in \cite{BW} and a new, more combinatorial one, in \cite{Il}. A new result of Kaie Kubjas shows that this is not true
for the 3-Kimura model \cite{kaie}. Our results are as follows.

\emph{The varieties associated to two different trivalent trees with the same number of leaves -- the caterpillar and snowflake -- and one of the groups $\z_3$, $\z_4$, $\z_5$, $\z_7$ or $\z_2\times \z_2$ have different Hilbert polynomials (Computation \ref{2kimura_poly}).}

The idea of the proof is to calculate the number of integer points in $nP$, where $P$ is the polytope associated
to the algebraic model of a phylogenetic tree. This task is much easier for normal varieties (in this case we obtain Hilbert-Ehrhart polynomial of the algebraic model).

One of the tools that we use is a program that computes the polytope defining a toric variety for a given tree and a group (see section \ref{program}). Our program, implementing the algorithm described in \cite{mateusz}, can be found at \texttt{http://www.mimuw.edu.pl/\~{}marysia/polytopes} (with a detailed instruction and specification of the input and output data format). It can be used to print the vertices of the polytope associated to a tree given by the user in an input file and one of the groups (with small numbers of elements) defined in the source code, but also by a slight modification of a source code one can obtain polytopes associated to models with other groups action.

\section*{Acknowledgements}
The authors would like to thank very much Jaros\l aw Wi\'sniewski for introducing them to the topic and encouraging them to study it further. We thank Winfried Bruns for the help with computations.
We would also like to thank Elizabeth Allman and Laurent Manivel for useful remarks.\\
The first author is supported by a grant of Polish MNiSzW (N N201 611 240).\\
The second author is supported by a grant of Polish MNiSzW (N N201 413 539).

\section{General group-based models}
To a tree $T$, a vector space $W$ with a distinguished basis and a subspace $\widetilde W\subset \End(W)$ one can associate an algebraic variety $X(T,\widetilde W)$. People familiar with phylogenetics should identify basis elements of $W$ and states of investigated random variables.
By \emph{a model} we mean a choice of $\widetilde W$. The construction of the variety $X(T,\widetilde W)$ is well-known for specialist and not necessary in general to understand the main ideas of the paper. It can be found for example in \cite{4aut}. Thus we focus on a class of general group-based models and describe the construction in this setting.
All the trees that we consider are rooted and we assume that the edges are directed away from the root.
\begin{df}[General group-based model]
Let $W$ be the regular representation of a finite, abelian group $G$. For a general group-based model we define the subspace $\widetilde W:=\End(W)^G$ as $G$ invariant endomorphisms of $W$.
\end{df}
\begin{exm}\emph{
For the group $\z_2$ we obtain the two-state Jukes-Cantor model also known as the Cavender-Farris-Neyman model. The elements of $\widetilde W$ represented as matrices are of the form:}
    \[
\left[
\begin{array}{cccccccc}
a&b\\
b&a\\
\end{array}
\right].
\]

\emph{
For the group $\z_2\times\z_2$ we obtain the 3-Kimura model. The elements of $\widetilde W$ represented as matrices are of the form:}
\[
\left[
\begin{array}{cccccccc}
a&b&c&d\\
b&a&d&c\\
c&d&a&b\\
d&c&b&a\\
\end{array}
\right].
\]
\end{exm}
As $\widetilde W$ is defined by the group $G$ we will write $X(T,G)$ instead of $X(T,\widetilde W)$.
In this case we can easily describe the affine cone over the variety $X(T,G)$. It is a spectrum of the semigroup algebra, with a semigroup generated by the lattice points of a polytope. The construction of the polytope depends on the tree and the group. It is purely combinatorial and very easy -- see Algorithm \ref{algorytm}. The description of the defining polytope is also given in Theorem \ref{mateusz}. Sometimes we do not specify the model and denote the associated variety $X(T)$.

The fact that to a general group-based models one can associate a (not necessary normal) toric variety was observed by many authors \cite{ES}, \cite{ESS}, \cite{SS}. To describe the defining polytope we need the following definitions.
\begin{df}[Group based flows, sockets]\label{sockets}
A group based flow is a function $n: E\rightarrow G$, where $E$ is the set of edges of the tree. Moreover we require that for any inner vertex $v$ of the tree, $e_0$ an incoming edge and $e_1,\dots, e_k$ outgoing edges we have
$$n(e_0)=n(e_1)+\dots+n(e_k).$$
A socket is a function $s: L\rightarrow G$, where $L$ is the set of leaves of the tree, that is edges adjacent to vertices of degree one. Moreover we require that the sum $\sum_{l\in L} s(l)$ is equal to the neutral element of the group.
\end{df}
Let us explain the terminology. A group based flow assigns elements of a group to edges, with a condition that for each inner vertex the sum of elements associated to incoming edges equals the sum of those associated to outgoing edges. If the group was equal to $\r$ this would be the condition for a flow, where leaves are sources and sinks. We use the terminology of group based flows, as this is a direct generalization to any abelian group of the notion introduced in \cite{BW} for $\z_2$. As all the flows that we use are group based, we write flow, meaning a group based flow.

\begin{exm}\label{socket}\emph{
Consider the group $G=\z_3$ and the following tree:
$$\xymatrix{
&&\circ\ar@{-}[dl]_{e_1}\ar@{-}[d]^{e_2}\\
&\ar@{-}[dl]_{e_3}\ar@{-}[d]^{e_4}\ar@{-}[dr]^{e_5}&\\
&&\\
}$$
Here $e_2$, $e_3$, $e_4$ and $e_5$ are leaves. An example of a socket is an association $e_2\rightarrow 1$, $e_3\rightarrow 1$, $e_4\rightarrow 2$, $e_5\rightarrow 2$.}

\emph{We can make a flow using the same association and extending it by $e_1\rightarrow 2$.}
\end{exm}

By restricting a flow to leaves we get a socket and this map is in fact a bijection.

Let us present a combinatorial description of the polytope that represents this toric variety.
\begin{thm}[\cite{mateusz}]\label{mateusz}
Let $P$ be a polytope representing a general group-based model. All integral points of $P$ are vertices. There is a natural bijection between vertices of $P$ and flows.
The polytope $P$ is a subpolytope of a lattice with basis elements indexed by pairs $(e,g)$ where $e$ is an edge of a tree and $g$ a group element.
The vertex of $P$ associated to a flow $n$ is a sum of all basis elements indexed by such pairs $(e,g)$ that satisfy $n(e)=g$.
\kwadrat
\end{thm}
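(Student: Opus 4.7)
My plan is to derive all the assertions directly from the construction of $P$ in Algorithm \ref{algorytm}, which exhibits $P$ as the convex hull of the $0/1$-vectors $v_n$ (one per flow $n$) whose $(e,g)$-coordinate equals $1$ if $n(e)=g$ and $0$ otherwise, inside the lattice with basis indexed by pairs $(e,g)$. Once that identification is in hand, the injectivity of the assignment $n \mapsto v_n$ is immediate: from $v_n$ one recovers the group element $n(e)$ as the unique $g$ with $(v_n)_{(e,g)} = 1$.

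The substantive content is to show that the only lattice points of $P$ are the $v_n$'s. Every generating vector $v_n$ satisfies, for each fixed edge $e$, the block-sum identity $\sum_{g \in G}(v_n)_{(e,g)} = 1$, so by convexity any $x \in P$ satisfies the same identity. Combined with non-negativity and integrality, this forces $x_{(e,g)} \in \{0, 1\}$ with exactly one $1$ per edge-block. Hence $x = v_m$ for a uniquely determined function $m \colon E \to G$, and it only remains to check that $m$ is a flow.

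For this I would write $x = \sum_i \lambda_i v_{n_i}$ as a convex combination of generators of $P$, with $\lambda_i > 0$ and $\sum_i \lambda_i = 1$. Fix an edge $e$; the identity $x_{(e, m(e))} = 1$ expands as $\sum_{i : n_i(e) = m(e)} \lambda_i = 1$, which together with $\sum_i \lambda_i = 1$ forces $n_i(e) = m(e)$ for every $i$ with $\lambda_i > 0$. Since this holds at every edge, every such $n_i$ coincides with $m$ on all of $E$, i.e.\ $n_i = m$. In particular $m$ inherits the flow condition from any one of the $n_i$, and $x = v_m$; this simultaneously shows that $x$ is a vertex and gives the claimed bijection with flows.

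The step I expect to be the real obstacle is not the convex-geometric argument above but the identification of $P$ in the first place. Verifying that Algorithm \ref{algorytm} actually produces the convex hull of $\{v_n : n \text{ a flow}\}$ requires diagonalizing the parameterization of the group-based model via the Fourier transform on the abelian group $G$ and checking that after diagonalization the resulting monomial map has exponent vectors precisely the $v_n$; this is exactly the input borrowed from \cite{mateusz}. Once that identification is made, the rigid $0/1$-structure of the vertices does all the remaining work, uniformly in both $G$ and $T$.
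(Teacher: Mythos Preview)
The paper does not give a proof of this theorem: it is stated with the citation \cite{mateusz} and closed immediately by the \verb|\kwadrat| symbol, so there is no argument in the paper to compare against. Your proposal is therefore not competing with anything written here; it is supplying a proof where the authors chose simply to quote the result.

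On its own merits, your argument is correct. The block-sum identity $\sum_{g\in G} x_{(e,g)}=1$ is indeed affine-linear and preserved under convex combinations, and together with integrality and non-negativity it pins down every lattice point of $P$ as a $0/1$-vector with a single $1$ per edge-block. Your extraction of the flow condition from any convex representation with strictly positive coefficients is clean and standard. You are also right that the only genuinely nontrivial step is the identification of $P$ with the convex hull of the $v_n$, which is precisely the Fourier-transform computation carried out in \cite{mateusz}; once that is granted, everything you wrote goes through uniformly in $G$ and $T$. In short, your write-up is a faithful expansion of what the paper leaves implicit behind the citation.
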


The generating binomials of a toric ideal associated to a polytope $P$ correspond to integral relations between integer points of this polytope -- see for example \cite{Sturmfelsksiazka}, \cite{Cox}, \cite{fulton}. Hence in our situation phylogenetic invariants correspond to relations between flows. In the abelian case each such relation can be described in the following way. We number all edges of a tree from $1$ to $e$. The flows are specific $e$-tuples of group elements. For example for the claw tree these are $e$-tuples of group elements summing up to the neutral element. Each relation of degree $d$ between the flows is encoded as a pair of matrices with $d$ columns and $e$ rows with entries that are group elements. We require that each column represents a flow. Moreover the rows of both matrices are the same up to permutation. For a general group-based model this is a purely combinatorial description of all phylogenetic invariants for any tree \cite{SS}. This
  construction can be generalized to other, so called $G$-models \cite{mateusz}, \cite{WDB}.
\begin{exm}\emph{
Consider the binary Jukes-Cantor model, that is the model corresponding to the group $\z_2$ and the following tree.}
 \begin{equation}\label{drzewo}\end{equation}
$$\xymatrix{
\ar@{-}[dr]&&&\ar@{-}[dl]\\
&v_1\ar@{-}[r]&\ar@{-}[dr]\ar@{-}[r]&\\
\ar@{-}[ur]&&&\\
}$$
\emph{
The leaves adjacent to $v_1$ will be represented by first two rows. The third row corresponds to the inner edge.  An example of a relation is given by a pair of matrices:}
\[
\left[
\begin{array}{cccccccc}
1&0\\
0&1\\
1&1\\
1&0\\
0&1\\
0&0\\
\end{array}
\right],
\left[
\begin{array}{cccccccc}
0&1\\
1&0\\
1&1\\
1&0\\
0&1\\
0&0\\
\end{array}
\right].
\]\emph{
The numbers $0$ and $1$ are treated as elements of $\z_2$. Due to the definition of the socket the third row has to be the sum of both the first two and last three rows.}
\end{exm}


\subsection{Algorithm of finding the polytope}\label{program}

The first step of all our computations is passing from an abstract model to the lattice polytope associated to it. The following algorithm for computing the associated polytope was proposed in \cite[Sect. 4]{mateusz}. Let $E$ be the set of edges of the tree and $N$ be the set of its inner vertices.
\begin{pr}\label{algorytm}

\begin{enumerate}
\item Orient the edges of the tree from the root.
\item For each inner vertex choose one outgoing edge.
\item Make a bijection $b:G\rightarrow B\subset \z^{|G|}$, where $B$ is the standard basis of $\z^{|G|}$.
\item Consider all possible associations of elements of $G$ with not-chosen edges (there are $|G|^{|E|-|N|}$ such associations).
\item For each such associations, make a full association by assigning an element of $G$ to each chosen edge in such a way that the (signed) sum of elements around each inner vertex gives a neutral element in $G$.
\item For each full association output the vertex of the polytope: $(b(g_e)_{e\in E})$, where $g_e$ is the element of the group associated to edge $e$.
\end{enumerate}
\end{pr}

There are two non obvious points in the implementation. One is step~2 of the algorithm: making a choice of an outgoing edge from each vertex (the tree is rooted and the edges are directed from the root). It is much easier to choose incoming edge for each vertex except the root, as this choice is almost canonical (depends only on the rooting), so does not have to be stored in the memory. By precomputing the group operations and storing the result we obtain the complexity $O(|N||G|^{|E|-|N|})$, as predicted in \cite{mateusz}. The current version of the program operates on the abelian groups defined in the source.

As a result we have a fast program which takes a tree in a simple text format as an input and allows to choose one of the groups from the library. It computes the list of vertices of a polytope associated to the input model and outputs it to a file. It also enables the user to work with this polytope, given as an object of an inner class of the program, in the further computations. For example, it simplified significantly the programming necessary to perform the computations of Hilbert-Ehrhart polynomials, described in \ref{HEpoly}.

\section{Phylogenetic invariants}\label{phyloinv}
In this section we investigate the most important objects of phylogenetic algebraic geometry -- ideals of phylogenetic invariants. The main problem in this area is to give an effective description of the whole ideal of the variety associated to a given model on a tree.
Our task is to find an efficient way to compute generators of these ideals.

We suggest a way of obtaining all phylogenetic invariants of a claw tree of a general group-based model -- more precisely we conjecture that our invariants generate the
whole ideal of the variety. These, together with the results of \cite{SS} could provide an algorithm listing all generators of the ideal of
phylogenetic invariants for any tree and for any general group-based model.

\subsection{Inspirations}
The inspirations for our method were the conjectures made by Sturmfels and Sullivant in \cite{SS}.
They are still open but, as we will see, they strongly support our ideas. In particular, we will prove in Proposition \ref{eq} that our algorithm works for trees with more than $8$ leaves for the 3-Kimura model if we assume that the weaker conjecture made in \cite{SS} holds.

First we introduce some notation. Let $K_{n,1}$ be a claw tree with $n$ leaves. Let $G$ be a finite abelian group. Let $\phi(G,n)=d$
be the least natural number such that the ideal associated to $K_{n,1}$ for the general group-based model $G$ is generated in degree $d$. The phylogenetic complexity of the group $G$ is defined as $\phi(G)=sup_n\phi(G,n)$. Based on numerical results Sturmfels and Sullivant suggested the following conjecture:
\begin{con}\label{stop}

For any abelian group $G$ we have $\phi(G)\leq |G|$.
\end{con}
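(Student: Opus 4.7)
The plan is to attack Conjecture \ref{stop} by combining the recursive method of Section \ref{methodgeneration} with the reduction of \cite{SS} from arbitrary trees to claw trees, and then closing off the base case by a finite computation.

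First I would establish, for every abelian group $G$ and every sufficiently large $n$, that the variety $X(K_{n,1})$ equals the scheme-theoretic intersection of the varieties $X(T)$, where $T$ ranges over trees with the same set of leaves as $K_{n,1}$ but whose internal vertices have valency strictly less than $n$. This is precisely the conjecture proposed in Section \ref{methodgeneration}, and the paper already proves it for the Jukes-Cantor model in \ref{conj_JC}. Granting it, the ideal of $K_{n,1}$ is generated by elements pulled back from ideals of trees with strictly smaller claws. Using the toric fiber product reduction of \cite{SS}, one then reduces the ideal of any tree $T$ with internal vertices of valency $\leq n-1$ to the ideals of the claw trees $K_{m,1}$ for $m \leq n-1$, with the degrees of the new generators under control. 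Combining these two ingredients yields the recursion $\phi(G,n) \leq \max_{m < n} \phi(G,m)$ for $n$ large, which forces $\phi(G) = \max_{m \leq n_0} \phi(G,m)$ for some threshold $n_0 = n_0(G)$. The remaining base case $\phi(G,m) \leq |G|$ for $m \leq n_0$ is a finite check: by Theorem \ref{mateusz} it reduces to verifying, for each such $m$ and each $d > |G|$, that every pair of $m \times d$ matrices with entries in $G$ — agreeing as multisets of rows and with each column summing to the neutral element — admits a reduction to pairs of column-width at most $|G|$, and this can be carried out using the polytope algorithm of Section \ref{program} together with a Gr\"obner or Markov basis routine.

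The main obstacle is the very first step: the conjecture of Section \ref{methodgeneration} is itself open outside the Jukes-Cantor case, and its proof for $G = \mathbb{Z}_2$ exploits a very special structure of binary flows that does not transfer directly. I would try to adapt that proof by isolating a notion of \emph{primitive} syzygy of degree greater than $|G|$ and, via a pigeonhole argument applied to the $d > |G|$ flow-columns, produce two columns that differ in only a proper subset of the edge-coordinates — exactly the obstruction to the syzygy being pulled back from a subdivision of lower valency. Making this simultaneous pigeonhole work \emph{uniformly} across all edges while respecting the flow constraint at the inner vertex is the essential technical difficulty, and is presumably the reason Conjecture \ref{stop} has resisted a general proof.
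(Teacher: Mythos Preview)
The statement you are attempting is Conjecture~\ref{stop}, which the paper records as an \emph{open} conjecture of Sturmfels and Sullivant; there is no proof in the paper to compare against. Only the case $G=\mathbb{Z}_2$ is settled (Proposition~\ref{dwa}, via \cite{Sonja}), and the paper otherwise supplies only equivalences, conditional results, and computational evidence.

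Your outline is candid about its gap, but the gap is structural rather than merely technical. By Proposition~\ref{rownowaznosc}, your Step~1 (Conjecture~\ref{glhip} for all large $n$) is \emph{equivalent} to boundedness of $\phi(G)$, which is already a weakening of the very statement you aim to prove; the reduction is therefore circular unless you can attack Conjecture~\ref{glhip} by means that do not presuppose a degree bound. The pigeonhole you sketch in the final paragraph is essentially the mechanism of the proofs of Propositions~\ref{rownowaznosc} and~\ref{eq}: one pigeonholes over \emph{rows} (edges) once the number of \emph{columns} (the degree $d$) is fixed and small relative to $n$. Without an a~priori bound on $d$ that argument cannot start, and your proposed column-pigeonhole for $d>|G|$ has no evident way to respect the flow constraint on all $n$ edges simultaneously---exactly the obstruction you yourself name. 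There is a second difficulty with Step~4: even granting the recursion for a fixed $G$, the threshold $n_0(G)$ produced by the argument of Proposition~\ref{rownowaznosc} has the shape $|G|^m+1$ with $m$ the (unknown) bound on $\phi(G)$, so $n_0$ is not computable in advance; and since Conjecture~\ref{stop} quantifies over all finite abelian $G$, the ``finite check'' is in fact an infinite family of computations with no uniform control on their size.
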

This conjecture was separately stated for the 3-Kimura model, that is for $G=\z_2\times\z_2$.

Still very little is known about the function $\phi$ apart from the case of the binary Jukes-Cantor model (see also \cite{Sonja}):
 \begin{obs}[Sturmfels, Sullivant]\label{dwa}
 In case of the binary Jukes-Cantor model $\phi(\z_2)=2$.\kwadrat
 \end{obs}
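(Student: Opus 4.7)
By Theorem \ref{mateusz}, binomial generators of the ideal correspond to pairs of multisets of flows with identical column sums. For $G = \z_2$, a flow on $K_{n,1}$ is determined by its leaf values and so identifies with an even-weight vector in $\mathbb{F}_2^n$. A degree $d$ invariant is thus a pair $(F, H)$ of multisets of $d$ such flows satisfying $\sum_j f_j = \sum_j h_j$ in $\Z^n$. Quadratic invariants are precisely the \emph{swap moves}: given two flows $f, f'$ and an even-cardinality subset $D \subseteq f \oplus f'$, form $g, g'$ by interchanging the values of $f$ and $f'$ on $D$. The parity condition guarantees $g, g'$ remain flows, and one checks $f + f' = g + g'$ in $\Z^n$.

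I would proceed by induction on $d$, the case $d \leq 2$ being trivial. For $d \geq 3$ and a relation $(F, H)$, any common element of $F$ and $H$ can be cancelled, yielding a degree $d-1$ relation to which the inductive hypothesis applies; hence I may assume $F \cap H = \emptyset$. The plan is to find a single quadratic swap on some pair $(f_a, f_b) \in F \times F$ that introduces a flow from $H$ into $F$. The key combinatorial lemma is: for disjoint $F$ and $H$ with equal column sums, there exist distinct $f_a, f_b \in F$ and $h \in H$ with $(f_a \oplus h) \cap (f_b \oplus h) = \emptyset$. Granted this, set $D := f_a \oplus h$: its cardinality is automatically even (both $f_a$ and $h$ have even weight), and the disjointness of the two disagreement sets is equivalent to $D \subseteq f_a \oplus f_b$. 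The resulting swap replaces $(f_a, f_b)$ inside $F$ by $(h, \, f_a \oplus f_b \oplus h)$, and cancellation of the newly introduced $h$ on both sides completes the induction.

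I expect the lemma itself to be the main obstacle. A first ingredient one must use is that equality of column sums rules out any coordinate $i$ at which every flow of $F$ disagrees with a fixed $h \in H$: if $h(i) = 0$ and every $f \in F$ has $f(i) = 1$, then $\sum_f f(i) = d$ forces each $h' \in H$ to satisfy $h'(i) = 1$, contradicting $h(i) = 0$; the case $h(i) = 1$ is symmetric. This yields only $\bigcap_{f \in F}(f \oplus h) = \emptyset$ for every $h \in H$, which is necessary but not sufficient to produce a pairwise disjoint pair of disagreement sets. The harder step appears to require a judicious choice of $h$ --- for instance, one minimizing the total Hamming weight $\sum_{f \in F}|f \oplus h|$ --- together with an exploitation of the even parity of each $|f \oplus h|$. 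Alternatively, one might bypass this combinatorial lemma entirely by showing that the toric ideal of the polytope associated to $X(K_{n,1}, \z_2)$ admits a quadratic Gr\"obner basis with respect to a suitable term order, which would yield the conclusion directly.
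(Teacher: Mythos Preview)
The paper does not give its own proof of this proposition: it is stated as a result of Sturmfels and Sullivant (with a pointer to \cite{Sonja}) and marked with \kwadrat. A few paragraphs later the paper notes that ``the quadratic Gr\"obner basis was explicitly constructed for any tree in \cite[Proposition 3]{Sonja},'' and that construction is what actually underlies $\phi(\z_2)=2$. So there is no in-paper argument to compare your proposal against; the relevant benchmark is the Gr\"obner basis proof in \cite{Sonja}.

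Your proposal, as you yourself flag, has a genuine gap. The combinatorial lemma you isolate --- that for disjoint multisets $F,H$ of even-weight vectors with equal column sums one can always find $f_a,f_b\in F$ and $h\in H$ with $(f_a\oplus h)\cap(f_b\oplus h)=\emptyset$ --- carries the entire weight of the argument, and you do not prove it. The observation that $\bigcap_{f\in F}(f\oplus h)=\emptyset$ for every $h$ is correct but, as you note, only says that not \emph{all} of $F$ can disagree with $h$ at a given coordinate; it does not produce a \emph{pair} with disjoint disagreement sets. For instance, with $d=3$ one can certainly have three sets $S_j=f_j\oplus h$ with empty triple intersection but all pairwise intersections nonempty (e.g.\ $\{1,2\},\{2,3\},\{1,3\}$), so the easy step does not close the gap. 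The minimization heuristic you sketch is not obviously sufficient either, and even if the lemma is true, a proof would likely be no shorter than the Gr\"obner basis argument it is meant to replace.

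Your closing suggestion is in fact the standard route: \cite[Proposition 3]{Sonja} exhibits a square-free quadratic Gr\"obner basis for the toric ideal of $X(K_{n,1},\z_2)$ under an explicit term order, which immediately gives generation in degree $2$ and hence $\phi(\z_2)=2$. If you want a complete self-contained proof, that is the direction to pursue rather than the ad hoc inductive reduction.
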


There are also some computational results -- to the table in \cite{SS} presenting the computations made by Sturmfels and Sullivant a few cases can be added.

\begin{comp}
Using \texttt{4ti2} software \cite{4ti2} we obtained the following:
\begin{itemize}
\item $\phi(6,\z_3) = 3$,
\item $\phi(4,\z_5) = 4$,
\item $\phi(3,\z_8) = 8$,
\item $\phi(3,\z_2\times \z_2 \times \z_2) = 8$,
\item $\phi(3,\z_4 \times \z_2) = 8$.
\end{itemize}
\end{comp}

 For the 3-Kimura model we do not even know whether the function $\phi$ is bounded. As we will see later, this conjecture is strongly related to the one stated in the next section.

\subsection{A method for obtaining phylogenetic invariants}\label{methodgeneration}

We propose a method that is inspired by the geometry of the varieties we consider. First we have to introduce some notation.
\begin{df}[contraction, prolongation]
We say that a tree $T_1$ is obtained by a contraction of an edge $e$ of the tree $T_2$ if the vertices of $T_1$ correspond to vertices of $T_2$ with the two vertices adjacent to $e$ identified. Notice that there is a bijection between edges of $T_2$ different from $e$ and edges of $T_1$.

In such a situation we say that $T_2$ is a prolongation of $T_1$.
\end{df}
$$
{\xymatrix{
&\textbf{\ar@{-}[dr]}&&\textbf{\ar@{-}[dl]}&&&\textbf{\ar@{-}[dr]}&&&&\\
T_1=&&\textbf{\ar@{-}[dr]}\textbf{\ar@{-}[r]}\textbf{\ar@{-}[l]}\textbf{\ar@{-}[dl]}&&\textbf{\ar@{<->}[rr]}&&&\textbf{\ar@{-}[r]}\textbf{\ar@{-}[l]}\textbf{\ar@{-}[dl]}&\textbf{\ar@{-}[r]}\textbf{\ar@{-}[dr]}\textbf{\ar@{-}[ur]} &&=T_2\\
&&&&&&&&&&\\}}$$
\begin{rem}
Note that these definitions are not the same as the definitions of flattenings introduced in \cite{AllRhMarkov} and further studied in \cite{DK}.
\end{rem}

Assume that we are in an abelian case.
Using Algorithm \ref{algorytm} and Definition \ref{sockets} one can see that vertices of the polytope correspond to sockets. On the other had it is well known that vertices of the polytope correspond to coordinates of the ambient space of the variety associated to the polytope.
In this setting the variety $X(T_1)$ associated to the tree $T_1$ is in a natural way a subvariety of $X(T_2)$, for any model. Notice that we can identify sockets of both varieties, as we may identify their leaves, so both varieties are contained in $\p^s$, where $s$ is the number of sockets. The natural inclusion corresponds to the projection of character lattices: we forget all the coordinates corresponding to the edge joining the vertices $v_1$ and $v_2$. Now the following conjecture is natural:
\begin{con}\label{glhip}
 The variety $X(K_{n,1})$ is equal to the (scheme theoretic) intersection of all the varieties $X(T_i)$, where $T_i$ is a prolongation of $K_{n,1}$ that has only two inner vertices, both of them of valency at least three.
\end{con}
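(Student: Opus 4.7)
The plan is to reduce the scheme-theoretic equality to an identity of toric ideals and then attack that identity combinatorially. As observed in the paragraph preceding the conjecture, for each prolongation $T_i$ of $K_{n,1}$ the inclusion $X(K_{n,1})\subseteq X(T_i)$ in $\p^s$ arises from the projection of character lattices that forgets the coordinate of the inner edge of $T_i$; on the ideal side this reads $I(X(T_i))\subseteq I(X(K_{n,1}))$. Intersecting the varieties and summing the ideals gives the easy halves $X(K_{n,1})\subseteq \bigcap_i X(T_i)$ and $\sum_i I(X(T_i))\subseteq I(X(K_{n,1}))$, so the content of the conjecture is the reverse ideal containment
\[ I(X(K_{n,1}))\subseteq \sum_i I(X(T_i)). \]

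Next I would exploit the fact that all of these ideals are toric and hence generated by binomials. By Theorem \ref{mateusz} and the matrix description of phylogenetic invariants recalled after it, a degree-$d$ binomial in $I(X(K_{n,1}))$ is a pair of matrices $(M,M')$ with $d$ columns and one row per leaf, each column a socket, such that the two matrices agree row by row as multisets of elements of $G$. Prolongations $T_A$ of $K_{n,1}$ with two inner vertices both of valency at least three are parametrised by unordered bipartitions of the leaves $L=A\sqcup A^c$ with $|A|,|A^c|\geq 2$, and the only extra condition that $I(X(T_A))$ imposes beyond $I(X(K_{n,1}))$ is that the multisets of column sums $\{\sum_{l\in A}M_{l,j}\}_j$ and $\{\sum_{l\in A}M'_{l,j}\}_j$ coincide---these are precisely the values taken on the inner edge of $T_A$ by the flows determined by Definition \ref{sockets}. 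Hence the conjecture translates into a purely combinatorial statement: every pair $(M,M')$ with matching row multisets should decompose as a sum of pairs each of which additionally matches the $A$-column-sum statistic for some admissible split $A$.

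To prove this decomposition I would induct on the degree $d$, verifying the base case and small $n$ directly with the program of Section \ref{program}. Within the inductive step the goal is, given $(M,M')$, to produce a sequence of local moves, each lying in some $I(X(T_B))$, that reshuffle columns until the $A$-column-sum multisets agree for a chosen split $A$; the residual binomial then lies in $I(X(T_A))$ and the induction closes. For $G=\z_2$ (Proposition \ref{dwa}) the socket lattice is so constrained that a single pair of mismatched columns can be fixed by swapping the entry at one leaf, which is essentially how I expect Conjecture \ref{conj_JC} to be proved. The main obstacle in the general abelian case is precisely this reshuffling step: each $T_B$-move controls only a single $G$-valued statistic, and it is not at all clear that these moves are jointly rich enough to repair an arbitrary mismatch. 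This is the difficulty which causes the conjecture to be equivalent, in the 3-Kimura case, to Sturmfels--Sullivant's Conjecture \ref{stop} via Proposition \ref{eq}: bounding the degree of generators of $I(X(K_{n,1}))$ and decomposing binomials into split-respecting pieces turn out to be two facets of the same problem. A proof for general $G$ will therefore likely require either a genuinely new combinatorial mechanism for propagating mismatches across different splits, or an inductive reduction on subgroups of $G$ combined with the claw-tree results of \cite{SS}.
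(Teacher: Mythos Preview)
The statement you are attempting is a \emph{conjecture}: the paper does not prove it in general. What the paper does establish is the binary Jukes--Cantor case (Proposition \ref{conj_JC}) and, conditionally on Conjecture \ref{stop}, the 3-Kimura case for $n>8$ (Proposition \ref{eq}); for general $G$ it remains open. Your proposal is not a proof either, and you effectively say so yourself. After correctly reducing to the ideal containment $I(X(K_{n,1}))\subseteq\sum_i I(X(T_i))$ and reformulating binomials as pairs of $G$-valued matrices with matching row multisets, you arrive at the decisive step---showing that any such pair can be rewritten, via moves lying in the various $I(X(T_A))$, so that the $A$-column-sum multisets agree for some admissible split---and then concede that this ``reshuffling step'' is an obstacle and that a proof ``will likely require either a genuinely new combinatorial mechanism \dots\ or an inductive reduction on subgroups''. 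That is an accurate diagnosis of the difficulty, but it leaves the conjecture exactly where the paper leaves it.

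Your combinatorial translation is the same one the paper uses in its partial results. In the $\z_2$ case the paper's argument is close to what you sketch: since $\phi(\z_2)=2$ one may take $d=2$, form the difference matrix $A=A_1-A_2$, normalise so the first row is zero, and observe that the remaining rows are forced to be $11$, whence any two of them give the desired split $S$. For 3-Kimura the paper assumes Conjecture \ref{stop} (so $d\le 4$), passes to $2d\le 8$ columns over $\z_2$, and uses a pigeonhole argument on partial row sums to find a nontrivial subset summing to zero in every column. Neither of these ideas extends to arbitrary $G$ without a degree bound, which is precisely why the general statement remains conjectural. Two small corrections: Proposition \ref{conj_JC} is a proposition, not a conjecture, and the result $\phi(\z_2)=2$ you want to cite there is Proposition \ref{dwa}; also, Proposition \ref{eq} gives only the implication Conjecture \ref{stop} $\Rightarrow$ Conjecture \ref{glhip} for $n>8$, not a full equivalence at the level of Conjecture \ref{glhip} (the converse direction is the easy observation that \ref{glhip} forces $\phi(G,n)$ to be non-increasing in $n$).
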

As $X(K_{n,1})$ is a subvariety of $X(T_i)$ for any prolongation $T_i$ one inclusion is obvious.
 Note also that the valency condition is made, because otherwise the conjecture would be obvious -- one of the varieties that we intersect would be
 equal to $X(K_{n,1})$ (contraction of a vertex of degree 2 does not change the corresponding variety). All $T_i$ have strictly
 smaller maximal valency than $K_{n,1}$, so if the conjecture holds then we can inductively use Theorem 23 of Sturmfels and Sullivant \cite{SS} (see
 also Theorem 12 \cite{sull}) to obtain all phylogenetic invariants for a given model for any tree of any valency, knowing just the ideal of the $K_{3,1}$.
 In such a case the ideal of $X(K_{n,1})$ is just the sum of ideals of trees with smaller valency. More precisely, if \ref{glhip} holds then the degree in which the ideals of claw trees are generated cannot grow when the number of leaves gets bigger. This means that $\phi(G)=\phi(G,3)$ which can be computed in many cases. In particular, Conjecture \ref{glhip} implies all cases of Conjecture \ref{stop} in which we can compute $\phi(G,3)$ -- this includes the most interesting 3-Kimura model.
 \begin{df}[tripod]
 The tree $K_{3,1}$ will be called a tripod.
 \end{df}
\begin{rem}
A reader familiar with phylogenetics may observe that varieties $X(T_1)$ and $X(T_2)$ are naturally contained in the same ambient space for any model, even if it does not give rise to toric varieties. Thus Conjecture \ref{glhip} can help to compute the ideals of claw trees for a large class of phylogenetic models.
\end{rem}

Of course one may argue that Conjecture \ref{glhip} above is too strong to be true. We prove it for the binary Jukes-Cantor model in Proposition \ref{conj_JC}. We will also consider two modifications of this conjecture to weaker conjectures that can still have a lot of applications. The first modification just states that Conjecture \ref{glhip} holds for $n$ large enough.

\begin{obs}\label{rownowaznosc}
Conjecture \ref{glhip} holds for $n$ large enough if and only if the function $\phi$ is bounded.
\end{obs}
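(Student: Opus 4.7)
The plan is to establish both implications using the matrix-pair description of $I(X(K_{n,1}))$ recalled in Section 2, which encodes a degree-$d$ generator of the ideal as a pair of $n \times d$ matrices $(A,B)$ with entries in $G$ whose columns are flows and whose rows agree as multisets.

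Assuming Conjecture \ref{glhip} holds for all $n \geq n_0$, I would show $\phi$ is bounded by induction on $n$. Set $M := \max_{3 \leq k \leq n_0 - 1} \phi(G,k)$, which is finite. The cases $n < n_0$ are immediate from the definition of $M$. For the inductive step $n \geq n_0$, the conjecture gives $I(X(K_{n,1})) = \sum_i I(X(T_i))$, where each prolongation $T_i$ has two inner vertices of valencies $d_1, d_2 \geq 3$ with $d_1 + d_2 = n+2$; in particular $d_1, d_2 \leq n-1$. Theorem 23 of Sturmfels and Sullivant \cite{SS} expresses $I(X(T_i))$ in terms of the ideals for $K_{d_1,1}$ and $K_{d_2,1}$, which by the inductive hypothesis are generated in degree $\leq M$, so $\phi(G,n) \leq M$ as well, closing the induction.

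For the converse, assume $\phi(G) \leq D$ for some fixed $D$. Then $I(X(K_{n,1}))$ is generated by binomials of degree $d \leq D$. For a fixed such binomial, encoded by $(A,B)$, I would choose for each leaf $e$ a permutation $\pi_e \in S_d$ satisfying $A_{e,j} = B_{e,\pi_e(j)}$ for all $j$. Since $|S_d| = d! \leq D!$, the pigeonhole principle yields two distinct leaves $e_1, e_2$ with $\pi_{e_1} = \pi_{e_2}$ whenever $n > D!$. Setting $S_1 := \{e_1, e_2\}$ and $S_2 := L \setminus S_1$, the resulting partition (valid for $n \geq 4$) corresponds to a prolongation $T_i$ whose two inner vertices have valencies $3$ and $n-1$, both at least $3$. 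For each column $j$ the inner-edge value is $\sum_{e \in S_1} A_{e,j}$, and the equality $\pi_{e_1} = \pi_{e_2}$ forces the inner-edge rows of $A$ and $B$ to match as multisets. Hence the given binomial lies in the single ideal $I(X(T_i))$, which establishes $I(X(K_{n,1})) \subseteq \sum_i I(X(T_i))$ for all $n$ sufficiently large; the reverse inclusion is automatic since $X(K_{n,1}) \subseteq X(T_i)$ for every prolongation.

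I expect the main subtlety to be the non-uniqueness of the permutation $\pi_e$ when row $e$ contains repeated entries; however, any consistent choice (for example the lexicographically smallest valid permutation) is enough, since only the equality $\pi_{e_1} = \pi_{e_2}$ and the induced identity of inner-edge-row sums are used. The rest of the argument in each direction is routine bookkeeping, with the main structural input being Theorem 23 of \cite{SS} for one direction and the pigeonhole count over $S_d$ for the other.
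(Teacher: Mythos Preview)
Your argument is correct in both directions. The forward implication (Conjecture~\ref{glhip} for large $n$ $\Rightarrow$ $\phi$ bounded) is essentially the paper's proof, just with the induction made explicit.

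For the converse, you take a genuinely different route. The paper forms the difference matrix $A=A_1-A_2$ and, by an iterated partial-sum pigeonhole argument, produces a proper subset $S$ of rows on which every column of $A$ sums to the neutral element; this yields the bound $n>|G|^{m}+1$ where $m=\phi(G)$. You instead pigeonhole directly on the matching permutations $\pi_e\in S_d$ between the rows of $A$ and $B$, obtaining two rows with the same $\pi$, and hence a prolongation with one side of size exactly two; this gives the bound $n>D!$. Your approach is more elementary (no iterated refinement), is independent of $|G|$, and always produces a $(3,n-1)$ split; for instance, for 3-Kimura under Conjecture~\ref{stop} it gives $n>24$ rather than the paper's $n>257$. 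On the other hand, the paper's bound $|G|^{m}$ is asymptotically better than $m!$ when $m$ is large relative to $|G|$, and the paper's method more directly sets up the sharper analysis carried out for 3-Kimura in Proposition~\ref{eq}. Your remark on the non-uniqueness of $\pi_e$ is correct: any fixed choice suffices. The only cosmetic point is that your threshold should be stated as $n>\max(D!,3)$ to ensure both parts of the split have valency at least~$3$, which you implicitly acknowledge.
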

\begin{proof}
One implication is obvious. Suppose that \ref{glhip} holds for $n>n_0$. We choose such $d$ that the ideals associated to $K_{l,1}$ are generated in degree $m$ for $l\leq n_0$. Using \ref{glhip} and the results of \cite{SS} we can describe the ideal associated to $K_{n,1}$ as the sum of ideals generated in degree $m$. It follows that this ideal is also generated in degree $m$, so the function $\phi$ is bounded by $m$.

For the other implication let us assume that $\phi(n)\leq m$. Let us consider any binomial $B$ that is in the ideal of the claw tree and is of degree less or equal to $m$. We prove that $B$ belongs to the ideal of some prolongation of a tree $T$, which is in fact more than the statement of Conjecture \ref{glhip}.

Such a binomial can be described as a linear relation between (at most $m$) vertices of the polytope of this variety. Each vertex is given by an association of orbits of characters to edges such that there exist representatives of orbits that sum up to a trivial character. Let us fix such representatives, so that each vertex is given by $n$ characters summing up to a trivial character.

Now the binomial $B$ can be presented as a pair of matrices $A_1$ and $A_2$ with characters as entries. Each column of the matrices is a vertex of the polytope. The matrices have at most $m$ columns and exactly $n$ rows. Let us consider the matrix $A=A_1-A_2$, that is entries of the matrix $A$ are characters that are differences of entries of $A_1$ and $A_2$. We can subdivide the first column of $A$ into groups of at most $|H|$ elements summing up to a trivial character. Then inductively we can subdivide the rows into groups of at most $|H|^i$ elements summing up to a trivial character in each column up to the $i$-th one.

For $n>|H|^m+1$ we can find a set $S$ of rows of $A$ such that the characters sum up to a trivial character in each column restricted to $S$, such that both the cardinality of $S$ and of its complement are greater then 1.
Note that the sums of the entries lying in a chosen column and in the rows in $S$ are the same in $A_1$ and $A_2$. Therefore, adding to both matrices an extra row whose entries are equal to the sum of the entries in the subset $S$ gives a representation of a binomial $B$ on a prolongation of $T$.
\end{proof}

In particular, this means that if Conjecture \ref{stop} of Sturmfels and Sullivant holds for the 3-Kimura model, then Conjecture \ref{glhip} also holds for this model for $n>257$. Later we will significantly improve this estimation.

For the second modification of Conjecture \ref{glhip} let us recall a few facts on toric varieties. Let $T_1$ and $T_2$ be two tori with lattices of characters given respectively by $M_1$ and $M_2$. Assume that both of them are contained in a third torus $T$ with the character lattice $M$. The inclusions give natural isomorphisms $M_1\simeq M/ K_1$ and $M_2\simeq M/K_2$, where $K_1$ and $K_2$ are torsion free lattices corresponding to characters that are trivial when restricted respectively to $T_1$ and $T_2$. The ideal of each torus (inside the big torus) is generated by binomials corresponding to such trivial characters. The points of $T$ are given by semigroup morphisms $M\rightarrow \c^*$. The points of $T_i$ are those morphisms that associate $1$ to each character from $K_i$. We see that the points of the intersection $T_1\cap T_2$ are those morphisms $M\rightarrow \c^*$ that associate 1 to each character from the lattice $K_1+K_2$. Of course the (possibly reducible) intersection $Y$ is generated by the ideal corresponding to $K_1+K_2$. This lattice may be not saturated, but $Y$ contains a distinguished torus $T'$, that is one of its connected components. If $K'$ is the saturation of the lattice $K_1+K_2$ then the characters of $T'$ are given by the lattice $M/ K'$. Let $X_i$ be the toric variety that is the closure of $T_i$, and $X'$ be the closure of $T'$. We call the toric variety $X'$ the \emph{toric intersection} of $X_1$ and $X_2$.

 In the setting of \ref{glhip} we conjecture the following:
\begin{con}\label{hip2}
The toric variety $X(T)$ is the toric intersection of all the toric varieties $X(T_i)$.
\end{con}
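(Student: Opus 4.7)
The plan is to dualize. Since $K_{K_{n,1}}$ is saturated (because $X(K_{n,1})$ is integral), Conjecture~\ref{hip2} amounts to the equality of rational spans $K_{K_{n,1}} \otimes \mathbb{Q} = \bigl(\sum_i K_{T_i}\bigr) \otimes \mathbb{Q}$, or dually $K_{K_{n,1}}^\perp = \bigcap_i K_{T_i}^\perp$ inside $(\mathbb{Q}^N)^*$, where $N = |H|$ is the number of sockets and $H = \{\vec g \in G^n : \sum_i g_i = 0\}$. Extending scalars to $\mathbb{C}$, $K_T^\perp \otimes \mathbb{C}$ is the image of the dual map $L_T^* \otimes \mathbb{C} \to (\mathbb{C}^N)^*$, which unpacks as the span of the functionals $\lambda(\sigma) = \sum_{e \in E(T)} \phi_e(g_\sigma(e))$ for functions $\phi_e \colon G \to \mathbb{C}$, where $g_\sigma(e)$ is the value on the edge $e$ of the unique flow extending the socket $\sigma$. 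For $T = K_{n,1}$ this is $V_0 := \bigl\{\sigma \mapsto \sum_{i=1}^n \phi_i(\sigma(e_i))\bigr\}$, and for a prolongation $T_S$ associated to a split $(S, S^c)$ with $|S|, |S^c| \geq 2$ it is $V_S := V_0 + U_S$, where $U_S = \bigl\{\sigma \mapsto \psi\bigl(\sum_{j \in S} \sigma(e_j)\bigr) : \psi \colon G \to \mathbb{C}\bigr\}$ is the contribution of the new inner edge.

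Next I would exploit the translation action of $H$ on $\mathbb{C}^N$: each of $V_0$, $U_S$, $V_S$ is manifestly $H$-invariant, hence a direct sum of one-dimensional isotypic components indexed by the characters $[\vec\chi] \in H^* = (G^*)^n/G^*_{\mathrm{diag}}$. Expanding the $\phi_e$'s into characters of $G$ one identifies which classes occur: $V_0$ is supported on the classes $[\vec\chi]$ admitting a representative with at most one nonzero coordinate, while $U_S$ is supported on the classes admitting a representative constant on $S$ and constant on $S^c$. In terms of the partition of $\{1,\dots,n\}$ induced on any representative by equality of coordinates (which is well-defined on the class), $V_0$ collects the classes whose partition is trivial or of type $(1, n-1)$, and $U_S$ collects the classes whose partition is trivial or exactly $\{S, S^c\}$.

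The argument is then concluded by a case analysis, assuming $n \geq 4$ so there is more than one valid split. For $[\vec\chi] \notin V_0$ the induced partition is neither trivial nor of type $(1, n-1)$: either it has three or more blocks, in which case $[\vec\chi]$ is in no $U_S$ and in no $V_S$; or it is a bisection $\{A, A^c\}$ with $|A|, |A^c| \geq 2$, in which case $[\vec\chi] \in U_S$ forces $\{S, S^c\} = \{A, A^c\}$, so by choosing any other valid split $S'$ we get $[\vec\chi] \notin V_{S'}$. Either way $[\vec\chi] \notin \bigcap_S V_S$, so $\bigcap_S V_S = V_0$, which is the dual form of Conjecture~\ref{hip2}. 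The hard part of the proof is the careful identification of the support of each $V_S$ in the isotypic decomposition — in particular, checking that $V_0 \cap U_S$ is exactly the one-dimensional subspace of constant functionals spanned by the trivial character of $H$, so that the partition-based description does not overcount; once that bookkeeping is in place the final case analysis is elementary.
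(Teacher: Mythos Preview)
The statement you are attempting to prove is a \emph{conjecture} in the paper; the authors do not give a proof. They reduce Conjecture~\ref{hip2} to the dimension comparison $\dim\bigl(\sum_i \ker A_{T_i}\bigr)=\dim\ker A_{K_{n,1}}$, carry this out by computer (GAP, Polymake, etc.) for the 3-Kimura model and $n\le 8$, and otherwise obtain it only conditionally from Conjecture~\ref{stop} via Proposition~\ref{eq}. So there is no proof in the paper to compare your proposal against.

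That said, your argument is a correct, unconditional proof of Conjecture~\ref{hip2} for every finite abelian group $G$ and every $n\ge 4$, and it goes well beyond what the paper establishes. The steps all check out: the row space of $A_T$ is exactly the span of the functionals $\sigma\mapsto\sum_e\phi_e(g_\sigma(e))$; these subspaces are invariant under the regular action of $H=\{\vec g\in G^n:\sum g_i=0\}$ on $\mathbb{C}[H]$; and the isotypic support of $V_0$ (resp.\ $U_S$) is precisely the set of classes in $H^*\cong (G^*)^n/G^*_{\mathrm{diag}}$ whose equality-partition is trivial or of type $(1,n-1)$ (resp.\ trivial or equal to $\{S,S^c\}$). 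The ``hard part'' you flag, namely $V_0\cap U_S=\mathbb{C}\cdot\mathbf{1}$, is then immediate, since for $|S|,|S^c|\ge 2$ no partition can be simultaneously of type $(1,n-1)$ and equal to $\{S,S^c\}$. The final case analysis --- any class outside $V_0$ has a partition with either $\ge 3$ blocks (hence lies in no $U_S$) or two blocks of size $\ge 2$ (hence lies in exactly one $U_S$, so misses any other $V_{S'}$) --- is clean and uses only that for $n\ge 4$ there are at least two valid splits.

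In short: where the paper computes ranks numerically case by case, you compute them uniformly by decomposing under the $H$-action, and the answer falls out of elementary combinatorics of set partitions. Your approach is genuinely different from, and strictly stronger than, anything in the paper.
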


This conjecture differs from the previous one by the fact that we allow the intersection to be reducible, with one distinguished irreducible component equal to $X(T)$.
We state this conjecture, because it can be checked using only the tori. As the biologically meaningful points are contained in the torus (see \cite{CFS}), this conjecture is of much importance for applications. Moreover, it is quite easy to check it for trees with small enough number of leaves using computer programs. To explain it properly, let us consider the following general setting.

Assume that the tori $T_i$ are associated to polytopes $P_i$ and that $T$ is just the torus of the projective space $\p^n\supseteq T_i$. Let $A_i$ be a matrix whose columns represent vertices of the polytope $P_i$. The characters trivial on $T_i$ or respectively binomials generating the ideal of $T_i$ are exactly represented by integer vectors in the kernel of $A_i$. The characters trivial on the intersection are given by integer vectors in $\ker A_1+\ker A_2$.

Note that the ideal of the toric intersection $T'$ of the tori $T_i$ in $T$ is generated by binomials corresponding to characters trivial on $T'$, that is by the saturation of $\ker A_1+\ker A_2$. These binomials define a toric variety in $\p^n$. This variety is contained in the intersection (in fact it is a toric component) of the toric varieties that are the closures of $T_i$. The equality may not hold however, as the intersection might be reducible.

In Conjecture \ref{hip2} we have to compare two tori, one contained in the other. To do this, it is enough to compare their dimension, that is the rank of the character lattice. Let us note that the dimension of the intersection $T_1\cap T_2$  is given by $n$ minus the dimension (as a vector space) of $\ker A_1+\ker A_2$, as it is equal to the rank of the lattice $\z^n\cap (\ker A_1+\ker A_2)$. To compute this dimension it is enough to compute the ranks of matrices $A_1$, $A_2$ and $B$, where $B$ is a matrix obtained by putting $A_1$ under $A_2$ (that is, $\ker B=\ker A_1\cap \ker A_2$). This can be done very easily using GAP (\cite{GAP}).

The results obtained for small trees will be used in the following section.


\subsection{Main Results}
To support Conjecture \ref{glhip} let us consider the case of binary Jukes-Cantor model. This model is well understood -- see for example \cite{BW}, \cite{Sonja}, \cite{SS}. In particular the quadratic Gr\"obner basis was explicitly constructed for any tree in \cite[Proposition 3]{Sonja}.
Now we can prove the following:
\begin{obs}\label{conj_JC}
Conjecture \ref{glhip} holds for the binary Jukes-Cantor model.
\end{obs}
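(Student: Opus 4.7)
The plan is to use the bound $\phi(\z_2) = 2$ from Proposition \ref{dwa} to reduce the claim to a purely combinatorial statement about degree 2 binomials. Since $X(K_{n,1}) \subseteq X(T_i)$ gives one inclusion of ideals for free, it suffices to show that every degree 2 generator of $I(X(K_{n,1}))$ already lies in $I(X(T_i))$ for some prolongation $T_i$; this upgrades to the desired scheme theoretic equality $I(X(K_{n,1})) = \sum_i I(X(T_i))$.

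First I will describe degree 2 generators combinatorially. Such a binomial is given by four sockets $n_1, n_2, n_1', n_2'$ of $K_{n,1}$ with $\{n_1(\ell), n_2(\ell)\} = \{n_1'(\ell), n_2'(\ell)\}$ as multisets for every leaf $\ell$, and nontriviality forces the \emph{flip set} $S := \{\ell : n_1(\ell) \neq n_1'(\ell)\}$ to be nonempty. Since $n_1, n_1'$ are both valid $\z_2$-sockets (their entries sum to $0$), a short parity check gives that $|S|$ is even, hence $|S| \geq 2$. Next, I will translate membership in $I(X(T_i))$ into the same language: a prolongation $T_i$ with two inner vertices of valency $\geq 3$ is the same as a partition $A \sqcup B$ of the leaves with $|A|, |B| \geq 2$, and in the $\z_2$ case each socket extends uniquely to a flow of $T_i$ with inner edge value $a(n) := \sum_{\ell \in A} n(\ell)$. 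By Theorem \ref{mateusz}, the binomial belongs to $I(X(T_i))$ iff $\{a(n_1), a(n_2)\} = \{a(n_1'), a(n_2')\}$, and a second short parity computation shows this holds whenever $|A \cap S|$ is even (in that case one even has $a(n_i) = a(n_i')$ for $i = 1, 2$).

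The final step is to produce such a partition. For $n \geq 4$ one simply picks two leaves $\ell_1, \ell_2 \in S$ and sets $A = \{\ell_1, \ell_2\}$, $B = L \setminus A$; then $|A| = 2$, $|B| = n - 2 \geq 2$, and $|A \cap S| = 2$, as required. The case $n = 3$ is handled separately and is essentially vacuous: Algorithm \ref{algorytm} lists the four $\z_2$-sockets of $K_{3,1}$ and a direct inspection shows they are affinely independent, so $X(K_{3,1}) = \p^3$, consistent with the empty intersection convention. I expect the main obstacle to be not the arithmetic but keeping track of the two parity arguments ($|S|$ even and $|A \cap S|$ even) together with the nontriviality condition on the binomial; once these are set up, the choice of prolongation is forced.
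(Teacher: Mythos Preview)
Your proof is correct and follows essentially the same route as the paper's: both reduce to degree~$2$ via $\phi(\z_2)=2$, observe that for a degree-$2$ binomial the difference matrix $A=A_1-A_2$ has only rows $00$ or $11$ (your flip set $S$ is precisely the set of $11$-rows, which is nonempty and of even size), and then take two elements of $S$ as one block of the bipartition defining the prolongation. Your write-up is marginally more explicit---you phrase the inner-edge compatibility as the parity of $|A\cap S|$ and you dispose of the vacuous case $n=3$ separately---but the underlying argument is identical to the paper's.
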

\begin{proof}
We use the same notation as in the proof of Proposition \ref{rownowaznosc}.
From \ref{dwa} we know that $\phi(\z_2)=2$. Let us consider any binomial of degree 2 for a claw tree with $n$ leaves. This is given by a pair of matrices $A_1$, $A_2$ with 2 columns each. Let $A=A_1-A_2$, where the difference uses the group law. We construct a subset $S$ of the set of rows which gives a prolongation of the tree.

By permuting columns of $A_2$ we may assume that the entries in the first row of the matrix $A$ are trivial. Let $A'$ be the matrix obtained by deleting the first row of $A$.
If we have a row $00$ in $A'$ then we are done, so assume there are only  $01$, $10$ and $11$. Notice that $01$ and $10$ cannot occur at all, as $A_1$ and $A_2$ would not have the same rows up to permutation.
For $n>3$ we can take twice $11$ as a strict subset of the set of rows, summing up to zero in each column.
\end{proof}

From the proof above it follows that in fact to obtain the variety of the claw tree for the binary Jukes-Cantor model it is enough to intersect three varieties corresponding just to three subdivisions. This subdivisions correspond to $S$ containing exactly first and second row or first and third, or second and third row.

The following fact is an easy consequence of the results of \cite{Sonja} and \cite{SS}. However, due to lack of references we include it. For the trivalent trees in was proved in \cite[Appendix]{BW}.
\begin{obs}\label{JCnormal}
The toric variety associated to any tree and the binary Jukes-Cantor model is projectively normal for any tree.
\end{obs}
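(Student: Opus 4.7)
The plan is to reduce projective normality of $X(T,\z_2)$ to normality (integral closedness) of its defining lattice polytope $P_T$ and then to establish the latter by exhibiting a regular unimodular triangulation of $P_T$. Recall that for a projective toric variety embedded by the lattice points of a polytope $P$, projective normality is equivalent to the condition that every lattice point of $kP$ is a sum of $k$ lattice points of $P$ for each $k\geq 1$.

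First, I would invoke \cite[Proposition 3]{Sonja}, which furnishes an explicit quadratic Gr\"obner basis of the toric ideal of $X(T,\z_2)$ for every tree $T$. The key step is then to verify that this basis is \emph{squarefree}: no leading monomial contains a variable with multiplicity greater than one. The binomials in \cite{Sonja} are indexed by pairs of sockets related by a minimal local swap of $\z_2$-labels along an inner path of $T$, so each monomial is manifestly a product of two distinct socket variables, and squarefreeness follows by a direct combinatorial inspection. If convenient, the general-tree case could instead be reduced to the tripod via Theorem~23 of \cite{SS}, with the additional observation that the toric fiber product along an inner edge preserves squarefreeness of the Gr\"obner bases built on the pieces.

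Once squarefreeness is in hand, the standard correspondence in \cite[Chap.~8]{Sturmfelsksiazka} produces a regular unimodular triangulation of $P_T$. Any lattice polytope admitting a unimodular triangulation is normal: an arbitrary lattice point of $kP_T$ lies in some simplex $\sigma$ of the triangulation and, by unimodularity of $\sigma$, decomposes as a sum of $k$ vertices of $\sigma$, each of which is a vertex of $P_T$ by Theorem~\ref{mateusz}. Normality of $P_T$ translates directly into projective normality of the associated toric variety $X(T,\z_2)$.

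The main obstacle will be the squarefreeness verification in the second step (or, in the alternative route, checking that the toric fiber product from \cite{SS} preserves unimodularity of the triangulation). Everything after that is a formal application of standard toric/Gr\"obner theory, so the entire proof reduces to a finite combinatorial inspection of the binomials produced by \cite{Sonja}, together with the gluing procedure of \cite{SS}.
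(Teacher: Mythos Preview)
Your proposal is correct and follows essentially the same route as the paper: both rely on the squarefree quadratic Gr\"obner basis of \cite[Proposition~3]{Sonja} and the standard implication ``squarefree initial ideal $\Rightarrow$ unimodular triangulation $\Rightarrow$ normal polytope $\Rightarrow$ projectively normal'' (the paper packages this implication as \cite[Proposition~1.6]{ToricIdealsGeneratedbyQuadraticBinomials}, you via \cite[Chap.~8]{Sturmfelsksiazka}). The only cosmetic difference is that the paper first reduces to claw trees via \cite[Lemma~5.1]{mateusz}, whereas you work directly with an arbitrary tree; since \cite{Sonja} already treats all trees, this reduction is not essential.
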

\begin{proof}
By \cite[Lemma 5.1]{mateusz} it is enough to consider claw trees. Now one can apply \cite[Proposition 1.6]{ToricIdealsGeneratedbyQuadraticBinomials}, where the assumptions are satisfied due to \cite[Proposition 3]{Sonja}.
\end{proof}
In general we conjecture the following.
\begin{con}
Consider a finite abelian group $G$. Suppose that the variety associated to the tripod for the general group based model is projectively normal. Than the variety associated to any tree is projectively normal.
\end{con}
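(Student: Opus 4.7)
The plan is to split the proof into two stages: first, reduce from arbitrary trees to claw trees $K_{n,1}$; second, bootstrap from the hypothesis on the tripod $K_{3,1}$ to all claws by induction on $n$. For the reduction, I would invoke Lemma~5.1 of \cite{mateusz}, used in the same way as in the proof of Proposition~\ref{JCnormal}: this lemma expresses the polytope $P(T)$ as a toric fiber product of the local polytopes $P(K_{n_v,1})$ attached at each internal vertex $v$ of valency $n_v$, and such fiber products preserve normality of the associated semigroup algebra. Hence it suffices to show that $P(K_{n,1})$ is a normal polytope for every $n\geq 3$.

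For the inductive step ($n\geq 4$, assuming $P(K_{m,1})$ normal for $3\leq m<n$), consider the prolongation $T'$ of $K_{n,1}$ with two internal vertices: a tripod at $v_1$ carrying the leaves $e_1,e_2$, and a claw $K_{n-1,1}$ at $v_2$ carrying the leaves $e_3,\ldots,e_n$, joined by a new internal edge $e_0$. Applying the reduction of Stage~1 to $T'$, together with the base case and the inductive hypothesis, yields that $P(T')$ is normal. The natural projection $\pi\colon P(T')\to P(K_{n,1})$ that forgets the $e_0$-coordinates is a bijection on vertices, because by Theorem~\ref{mateusz} a flow on $T'$ is uniquely recovered from its restriction to the leaves. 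Granting the \emph{lifting claim} that $\pi$ restricts to a surjection $kP(T')\cap\Lambda(T')\twoheadrightarrow kP(K_{n,1})\cap\Lambda(K_{n,1})$ on lattice points for every $k\geq 1$, normality of $P(K_{n,1})$ follows at once: any integer point $v\in kP(K_{n,1})$ would lift to some $\tilde v\in kP(T')\cap\Lambda(T')$, which decomposes as $\tilde v=\tilde v_1+\cdots+\tilde v_k$ with $\tilde v_i$ vertices of $P(T')$ by normality of $P(T')$, and projecting gives $v=\pi(\tilde v_1)+\cdots+\pi(\tilde v_k)$, a sum of $k$ vertices of $P(K_{n,1})$.

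The main obstacle is precisely the lifting claim. Given an integer point $v\in kP(K_{n,1})$, one must produce integer values $\tilde v_{e_0,g}$ for each $g\in G$ so that the resulting vector lies in $kP(T')$. The naive idea --- writing $v=\sum_i\lambda_i v^{(s_i)}$ as a real convex combination of socket-vertices and lifting each socket $s_i$ to its unique extending flow on $T'$ --- typically fails, because the $\lambda_i$ are only real and any integer regrouping is in effect the normality statement we are trying to prove. An honest proof of the lift therefore has to construct the $e_0$-values combinatorially from the integer data of $v$, and the hypothesis that $P(K_{3,1})$ is normal must enter through the local structure at the tripod vertex $v_1$ where $e_0$, $e_1$, $e_2$ meet. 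I expect this to be the principal technical difficulty, likely requiring a clearer understanding of the facet structure of $P(T')$ than is currently available.
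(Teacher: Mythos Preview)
The statement you are attempting to prove is stated in the paper as a \emph{conjecture}, not as a proposition or theorem; the authors do not supply a proof, and to our knowledge it was open at the time of writing. So there is no ``paper's own proof'' to compare your attempt against.

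Your two-stage strategy is natural and the first stage is sound: the reduction from arbitrary trees to claw trees via Lemma~5.1 of \cite{mateusz} is exactly how the paper handles the $\z_2$ case in Proposition~\ref{JCnormal}, and it goes through verbatim for any abelian $G$. The difficulty is entirely in Stage~2, and you have correctly isolated it: the argument stands or falls on the \emph{lifting claim}, that every lattice point of $kP(K_{n,1})$ lifts to a lattice point of $kP(T')$ along the projection that forgets the internal-edge coordinates. You do not prove this claim, and you say so explicitly; so what you have written is an outline of an approach together with an honest identification of the missing step, not a proof.

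It is worth stressing that the lifting claim is not a technicality to be filled in later---it is essentially equivalent in strength to the conjecture itself. Indeed, the projection $\pi$ is linear and sends vertices to vertices bijectively, so normality of $P(T')$ transfers to $P(K_{n,1})$ \emph{if and only if} the lift exists at every level $k$. Your candidate mechanism for producing the lift (convex combinations of socket-vertices) fails for exactly the reason you state, and there is no known general device---short of already knowing normality of $P(K_{n,1})$---that manufactures the missing $e_0$-coordinates from the integer data of $v$. In particular, nothing in the hypothesis ``$P(K_{3,1})$ is normal'' obviously helps here beyond what you have already used to get normality of $P(T')$. This is presumably why the authors stated the result as a conjecture rather than a proposition. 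Your write-up would be improved by framing it as a reduction of the conjecture to the lifting claim, rather than as a proof.
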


Now we prove the following conditional result for the 3-Kimura model:
\begin{obs}\label{eq}
If Conjecture \ref{stop} of Sturmfels and Sullivant holds then Conjecture \ref{glhip} holds for $n>8$.
\end{obs}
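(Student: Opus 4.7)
The plan is to combine the degree bound supplied by Conjecture~\ref{stop}, namely $\phi(\z_2\times\z_2)\le|G|=4$, with the combinatorial reformulation used in the proof of Proposition~\ref{rownowaznosc}, together with a sharp pigeonhole estimate that exploits the $\z_2$-vector space structure of $G$. Under Conjecture~\ref{stop} the ideal of $X(K_{n,1})$ for the 3-Kimura model is generated by binomials of degree $d\le 4$, so by the reduction appearing in the proof of Proposition~\ref{rownowaznosc} it suffices to show that for $n>8$ every such degree-$\le 4$ binomial already lies in the ideal of some prolongation $T_i$ of $K_{n,1}$ with two inner vertices of valency at least three.

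Represent such a binomial by matrices $A_1,A_2\in G^{n\times d}$, $d\le 4$. The flow condition on columns forces each column of $A:=A_1-A_2$ to sum to zero in $G$; the matching condition (for every $i$, the entries of row $i$ of $A_1$ and of $A_2$ form the same multiset) forces in addition each row of $A$ to sum to zero. As in the proof of Proposition~\ref{rownowaznosc}, finding a prolongation whose ideal contains the binomial then reduces to exhibiting a subset $S\subseteq\{1,\dots,n\}$ with $2\le|S|\le n-2$ and $\sum_{i\in S}A[i,j]=0$ in $G$ for every column $j$.

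The key step exploits that $G=\z_2\times\z_2$ is elementary abelian, so the space $V:=\{v\in G^d:\sum_j v_j=0\}$ is a $\z_2$-vector space of dimension $2(d-1)\le 6$, and the subset-sum assignment
\[
f\colon\z_2^n\longrightarrow V,\qquad T\longmapsto\sum_{i\in T}A[i,:],
\]
is $\z_2$-linear with $\mathbf 1\in\ker f$ (by the column-sum condition). Consequently $\dim_{\z_2}\ker f\ge n-2(d-1)\ge 3$ whenever $n>8$, so $|\ker f|\ge 8$. A kernel element $v$ provides an admissible $S$ unless its Hamming weight lies in $\{0,1,n-1,n\}$, i.e., unless $v\in\{0,\mathbf 1\}$ or $v=e_i$ or $v=\mathbf 1-e_i$ for some index $i$ with $A[i,:]=0$.

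The main obstacle---and the source of the improvement from the bound $n>|G|^{\phi(G)}+1=257$ of Proposition~\ref{rownowaznosc} down to $n>8$---is to rule out the possibility that $\ker f$ is exhausted by these ``bad'' vectors. If two rows of $A$ coincide (in particular, if $A$ has at least two zero rows) then $e_i+e_j\in\ker f$ has Hamming weight $2$ and yields the required $S$. Otherwise at most one row of $A$ vanishes, so the number of bad kernel vectors is at most $4$, while $|\ker f|\ge 8$ forces the existence of a kernel vector of Hamming weight strictly between $1$ and $n-1$, producing the desired prolongation and completing the argument.
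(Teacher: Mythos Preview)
Your proof is correct and follows essentially the same approach as the paper: both exploit that for the 3-Kimura model the rows of $A=A_1-A_2$ lie in a $\z_2$-vector space of dimension $2(d-1)\le 6$, and then use a pigeonhole/dimension count for $n>8$ to locate a subset $S$ of rows with $2\le|S|\le n-2$ and zero column sums. The only cosmetic difference is that the paper first permutes the columns of $A_2$ to force one row of $A$ to vanish and then applies Dirichlet's principle to complementary pairs of subsets of the remaining $n-1$ rows, whereas you work directly with the $\z_2$-linear map $f\colon\z_2^n\to V$, bound $\dim\ker f\ge 3$, and dispose of the at most four ``bad'' kernel vectors; the two bookkeeping devices are equivalent.
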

\begin{proof}
We use the same notation as in the previous proof, but instead of considering the matrix $A$ (corresponding to a chosen binomial) with $k$ columns and entries from $\z_2\times \z_2$ we assume that it has $2k$ columns and entries from $\z_2$. Let us note that the number of $1$ in each row both in even and odd columns has to be even. This follows from the fact that rows of $A$ are differences of rows that were equal up to permutation. This means that both projections from $\z_2\times \z_2\rightarrow\z_2$ gave rows that were equal up to permutation. The difference of such vectors always has an even number of $1$.

Once again we may assume that the entries in the first row of $A$ are trivial characters, that is they are equal to zero. Let $A'$ be the matrix obtained by deleting the first row of $A$. For each subset of rows of $A'$ we may consider a vector of length equal to the number of columns of $A'$, whose entries are given by sums of characters from the subset. Note that this vector always has an even number of $1$ both in even and odd columns. Because we assume Conjecture \ref{stop}, the matrix $A'$ has at most 8 columns. By Dirichlet's principle, if $n>8$ then we can find two subsets of rows of $A'$ that are not complements of each other, such that their sum vector is the same. If we take a symmetric difference of these subsets, we obtain a strict, nonempty set $S$ of rows of $A'$, summing up in each column to a trivial character. We add the first row of $A$ to $S$ or its complement, so that both sets have more than one element. Thus we obtain a subdivision of the set of rows of
 $A$ such
  that the given binomial is in the ideal of the tree corresponding to this division.
\end{proof}
For $n\leq 8$ we checked, using the computer programs Polymake, 4ti2, Macaulay2 and GAP, that the toric intersection of the tori of subdivisions gives the torus of the claw tree. We used the linear algebra described in the previous section. This proves that if Conjecture \ref{stop} holds for 3-Kimura model, then Conjecture \ref{hip2} holds. Moreover, in all the checked cases it was enough to consider just two subdivisions.

To summarize, we know that for 3-Kimura model Conjecture \ref{glhip} implies both Conjectures \ref{hip2} and \ref{stop} and moreover Conjecture \ref{stop} implies \ref{hip2} and for $n>8$ also Conjecture \ref{glhip}.

\subsection{Example of an application}
Let us present the method of finding phylogenetic invariants on an example. For simplicity consider the Jukes-Cantor model. Suppose that one is interested in phylogenetic invariants for $K_{4,1}$. This example is well-known and phylogenetic invariants can be found using many different methods. We have chosen this example as the number of phylogenetic invariants is small enough to be included in the paper.

Suppose that we already know the phylogenetic invariants for trivalent trees. For other group-based models, if this step is not clear, we refer to \cite{SS}. Consider a prolongation of $K_{4,1}$:
$$\xymatrix{
1\ar@{-}[dr]&&&\ar@{-}[dl]3\\
&\ar@{-}[r]&\ar@{-}[dr]&\\
2\ar@{-}[ur]&&&4\\
}$$
There are $8$ variables given respectively by:
$$q_{0000}, q_{0011}, q_{0101}, q_{0110}, q_{1001}, q_{1010}, q_{1100}, q_{1111}.$$
For the prolongation the ideal is generated by two relations:
$$q_{0000}q_{1111}-q_{1100}q_{0011},\quad q_{1010}q_{0101}-q_{1001}q_{0110}.$$
We may consider a different prolongation obtained by interchanging the leaves numbered $2$ and $3$. We obtain the following relations:
$$q_{0000}q_{1111}-q_{1010}q_{0101},\quad q_{1010}q_{0101}-q_{1001}q_{0110}.$$
These four (if fact one is redundant and it is enough to consider three of them) generate the ideal for $K_{4,1}$.

\section{Computational results}

\subsection{Example of a non-normal general group-based model}\label{normality}

Knowing that the projective variety associated to a general group-based model is toric, it is natural to ask whether it is normal. It is a very important property, as a lot of theorems in toric geometry work just for normal varieties. We check it investigating the polytope $P$ associated to a model: it is normal if for any natural number $n$ any point in the polytope $nP$ is a sum of $n$ points of $P$.

Computations described in \cite{mateusz} have shown that for trivalent trees for the groups $\Z_2$, $\Z_3$, $\Z_4$ and $\Z_2\times \Z_2$ the associated varieties are projectively normal. However for the 2-Kimura model the associated variety is not normal. We are interested in the question whether all models for abelian (or at least cyclic) groups are normal. Now, using our implementation of Algorithm \ref{algorytm} and Normaliz (see \cite{normaliz}) we are able to check normality for a few more models.

It is well known that toric fiber products of polytopes associated to two trees $T_1$ and $T_2$ gives a polytope associated to the tree obtained by identifying one leaf of $T_1$ and one of $T_2$ (see \cite{SS}, \cite{sull}). Note that any trivalent tree can be obtained from a tripod (a tree with one inner vertex and three leaves) by a series of such gluing procedures.
Hence, because of Lemma 5.1 from \cite{mateusz}, if we check normality for the chosen group and the tripod, we know whether all algebraic varieties for this group and any trivalent tree are normal. Using our program we can obtain the set of vertices of the polytope related to the investigated group and the tripod. Finally, we apply Normaliz \cite{normaliz} to check the normality of this polytope (in the lattice generated by its vertices). Thus we obtain

\begin{comp}\label{nienormalneobliczenia}
The polytope associated with the tripod and one of the groups $G=\Z_6, \z_8,\z_2\times\z_2\times\z_2,\z_4\times\z_2$ is not normal. Hence the algebraic variety representing this model is not normal.
\end{comp}

In particular, the class of general group-based models contains non-normal models. We believe it can be difficult to characterize the class of groups for which the associated varieties are normal, or even to determine a big (infinite) class of normal, toric general group-based models. On the other hand one has the following result:
\begin{obs}\label{nienormalneind}
Let $T$ be a phylogenetic tree and let $G_1$ be a subgroup of an abelian group $G_2$. If the variety corresponding to the tree $T$ and group $G_1$ is not normal then the variety corresponding to the tree $T$ and group $G_2$ is also not normal.
\end{obs}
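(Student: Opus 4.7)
The plan is to realize $P(T,G_1)$ as a face of $P(T,G_2)$, and then deduce non-normality of $P(T,G_2)$ from non-normality of $P(T,G_1)$ by the general principle that faces of normal lattice polytopes are normal.

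First, I would set up the inclusion. By Theorem \ref{mateusz}, for each group $G$ the polytope $P(T,G)$ sits in the lattice $M_G$ with basis indexed by pairs $(e,g)$ with $e\in E$ and $g\in G$, and its vertices are in bijection with group-based flows $n\colon E\to G$, via $n\mapsto v_n=\sum_{e\in E}(e,n(e))$. Since $G_1\subset G_2$, every $G_1$-flow is automatically a $G_2$-flow (the flow condition at each inner vertex is an equation in $G_2$ that happens to hold in $G_1$), so there is a natural inclusion of lattices $M_{G_1}\hookrightarrow M_{G_2}$ carrying the vertex set of $P(T,G_1)$ into the vertex set of $P(T,G_2)$, and hence $P(T,G_1)\subset P(T,G_2)$.

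Next, I would exhibit the face structure. Define the linear functional $\phi$ on $M_{G_2}\otimes\r$ by $\phi(e,g)=0$ if $g\in G_1$ and $\phi(e,g)=1$ if $g\in G_2\setminus G_1$. For any $G_2$-flow $n$ one computes
$$\phi(v_n)=\#\{e\in E: n(e)\notin G_1\}\geq 0,$$
with equality precisely when $n$ takes all its values in $G_1$, i.e.\ when $n$ is a $G_1$-flow. Thus $\phi\geq 0$ on every vertex of $P(T,G_2)$ and so on the whole polytope, and $\{\phi=0\}\cap P(T,G_2)$ is the convex hull of the $G_1$-flow vertices, which is exactly $P(T,G_1)$. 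Hence $P(T,G_1)$ is a face of $P(T,G_2)$.

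Finally, argue by contrapositive. Suppose $P(T,G_2)$ is normal. Given $x\in nP(T,G_1)\cap M_{G_1}$, viewing $x$ in $M_{G_2}$ yields $x\in nP(T,G_2)\cap M_{G_2}$, so by normality $x=v_1+\cdots+v_n$ with each $v_i$ a lattice point, equivalently a vertex by Theorem \ref{mateusz}, of $P(T,G_2)$. Applying $\phi$ gives $0=\phi(x)=\sum\phi(v_i)$, and since $\phi(v_i)\geq 0$ we conclude $\phi(v_i)=0$ for all $i$, so each $v_i$ is in fact a vertex of $P(T,G_1)$. This shows $P(T,G_1)$ is normal; the contrapositive gives the proposition. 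The step that needs the most care — and is really the heart of the argument — is verifying that $P(T,G_1)$ genuinely cuts out a face of $P(T,G_2)$ defined over the right sublattice; this is what the clean basis description of Theorem \ref{mateusz} buys us, and it is the reason the subgroup hypothesis transfers non-normality upward rather than downward.
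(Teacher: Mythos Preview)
Your argument is correct and follows essentially the same route as the paper: both exploit that the coordinates indexed by $(e,g)$ with $g\in G_2\setminus G_1$ are nonnegative on all vertices of $P(T,G_2)$ and vanish on the image of $P(T,G_1)$, so any decomposition of a point coming from $P(T,G_1)$ into vertices of $P(T,G_2)$ must already use only vertices of $P(T,G_1)$. Your phrasing via the supporting functional $\phi$ and the slogan ``faces of normal polytopes are normal'' is a clean repackaging of the paper's direct coordinate computation; the one place to be slightly more explicit is that normality here is taken relative to the sublattice generated by the vertices (the paper's $\tilde M_i$), and one should note that vertices of $P(T,G_1)$ map to vertices of $P(T,G_2)$ so that a witness $x\in nP(T,G_1)\cap\tilde M_{G_1}$ indeed lands in $\tilde M_{G_2}$.
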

\begin{proof}
Let $M_i$ be a lattice whose basis is indexed by pairs of an edge of a tree and an element of the group $G_i$. The inclusion $G_1\subseteq G_2$ gives us a natural injective morphism $f:M_1\rightarrow M_2$. Let $P_i\subset M_i$ be the polytope associated to the model for the tree $T$ and group $G_i$. Let $\tilde M_i\subset M_i$ be a sublattice spanned by vertices of the polytope $P_i$.

As $P_1$ is not normal in the lattice spanned by its vertices, there exists a point $x\in nP_1\cap \tilde M_1$, that is not a sum of $n$ vertices of the polytope $P_1$. Let us consider $y=f(x)$. The vertices of $P_1$ are mapped to vertices of $P_2$. We see that $y\in nP_2\cap \tilde M_2$. If $P_2$ was normal in $\tilde M_2$ we would be able to write $y=\sum_{i=1}^n q_i$ with $q_i\in P_2$.

Let us notice that each point in the image $f(M_1)$ has zero on each entry of the coordinates indexed by any edge and any element of the group $g\in G_2\setminus G_1$. In particular $y$ has zero on these entries. As all entries of all vertices of $P_2$ are nonnegative, this proves that all entries indexed by any edge and any element of the group $g\in G_2\setminus G_1$ are zero for $q_i$. However, we see that vertices of $P_2$ that have all non-zero entries on coordinates indexed by pairs of an edge and an element $g\in G_1$ are in the image of $P_1$. Hence $q_i=f(p_i)$ for $p_i\in P_1$. We see that $x=\sum p_i$, which is impossible.
\end{proof}

In particular we see that all abelian groups $G$ such that $|G|$ is divisible by $6$ or $8$ give rise to non-normal models.

Let $P$ be the polytope associated to the tripod and the group $\z_6$. We have already seen that $P$ is not normal, hence the associated affine variety is not normal. One would be also interested if the associated projective variety is normal or, equivalently, if the polytope is very ample. By direct computation for (any) cone associated to a vertex of a polytope $P$ we obtain the following result.
\begin{comp}
The polytope $P$ associated to the tripod and the group $\z_6$ is not very ample. Hence the associated projective toric variety is not normal.
\end{comp}
\subsection{Hilbert-Ehrhart polynomials}\label{HEpoly}

The binary Jukes-Cantor model (for trivalent trees) has an interesting property, stated and proved in \cite{BW}: an elementary mutation of a tree gives a deformation of the associated varieties (see Construction 3.23). This implies that binary Jukes-Cantor models of trivalent trees with the same number of leaves are deformation equivalent (Theorem 3.26 in \cite{BW}). As it was not obvious what to expect for other models, we computed Hilbert-Ehrhart polynomials, which are invariants of deformation, in some simple cases.

\subsubsection{Numerical results}
We checked models for two different trees with six leaves (this is the least number of leaves for which there are non-isomorphic trees, exactly two), the \emph{snowflake} and the \emph{3-caterpillar}. The most interesting ones were the cases of the biologically meaningful 2-Kimura and 3-Kimura models.

The value of the Ehrhart polynomial of a polytope $P$ for a natural number $n$ is the number of lattice points in $nP$. Thus one way to determine the Hilbert-Ehrhart polynomial of a toric variety is to compute numbers of lattice points in some multiples of the associated polytope. Even if it is not possible to get enough data to determine the polynomials (eg. because the numbers are too big), sometimes we can say that polynomials for two models are not equal, because their values for some $n$ are different.

Before we completed our computations, Kubjas computed numbers of lattice points in the third dilations of the polytopes for 3-Kimura model on the \emph{snowflake} and the \emph{3-caterpillar} with 6 leaves and got 69248000 and 69324800 points respectively (see \cite{kaie}). Thus she proved that varieties associated with these models are not deformation equivalent.

Our computations confirm her results as for the 3-Kimura model and also give the following

\begin{comp}\label{2kimura_poly}
The varieties associated with 2-Kimura models for the snowflake and the 3-caterpillar trees have different Ehrhart polynomials. In the second dilations of the polytopes there are 56992 lattice points for the snowflake and 57024 for the 3-caterpillar.

Also the pairs of varieties associated with general group-based models for the snowflake and the 3-caterpillar trees and
\begin{enumerate}
\item $G=\Z_3$,
\item $G=\Z_4$,
\item $G=\Z_5$,
\item $G=\Z_7$
\end{enumerate}
have different Hilbert-Ehrhart polynomials and therefore are not deformation equivalent. (For these groups the associated varieties are normal, which can be checked using Polymake.)
The precise results of the computations are presented in the Appendix.

In the cases of
\begin{enumerate}
\item $G=\Z_8$,
\item $G=\Z_2\times\Z_2\times \Z_2$,
\item $G=\Z_9$
\end{enumerate}
the varieties have different Hilbert functions. As we know that the models for $\Z_8$ and $\Z_2\times\Z_2\times \Z_2$ are not normal and for $\Z_9$ we could not check the normality, these results do not imply that there is no deformation equivalence in these cases.
\end{comp}

\subsubsection{Technical details}

The first attempt to compute numbers of lattice points in dilations of a polytope was the direct method: constructing the list of lattice points in $nP$ by adding vertices of $P$ to lattice points in $(n-1)P$ and reducing repeated entries. This algorithm is not very efficient, but (after adding a few technical upgrades to the implementation) we were able to confirm Kubjas' results \cite{kaie}. However, this method does not work for non-normal polytopes. As we planned to investigate 2-Kimura model, we had to implement another algorithm.

The second idea is to compute inductively the relative Hilbert polynomials, i.e. number of points in the $n$-th dilation of the polytope intersected with the fiber of the projection onto the group of coordinates that correspond to a given leaf. Our approach is quite similar to the methods used in \cite{kaie} and \cite{sull}.

First we compute two functions for the tripod.
Let $P\subset \z^{3m}\cong\z^m\times\z^m\times\z^m$ be the
polytope associated to a tripod. Let $pr_i:\z^{3m}\cong\z^m\times\z^m\times\z^m\rightarrow\z^m$ be the projection onto the $i$-th group of coordinates. We distinguish one edge of the tripod corresponding to the third group of coordinates in the lattice. Let $f$ be a function such that $f(a)$ for $a = (a_1,\ldots,a_m)\in \z^m$ is the number of lattice points in $(a_1+\dots+a_m)P$ that project to $a$ by $pr_3$. We compute $f(a)$ for sufficiently many values of $a$ to proceed with the algorithm.

\begin{exm}\label{dodwspol}
The polytope $P$ for the binary Jukes-Cantor model has the following vertices:
$$v_1=(0,1,0,1,0,1),$$
$$v_2=(0,1,1,0,1,0),$$
$$v_3=(1,0,0,1,1,0),$$
$$v_4=(1,0,1,0,0,1).$$
These are the only integral points in $P$.
In this case $f(1,0)=2$
because there are exactly two points, $(1,0,0,1,1,0)$ and $(0,1,1,0,1,0)$,
that are in $1P=P$ and project to $(1,0)$ via the third projection.
\end{exm}
The function $f$ will be our base for induction. Next, we need to compute the number of points in the fiber of a projection onto two distinguished leaves. Let $g$ be a function such that $g(a,b)$ for $(a,b) = (a_1,\dots,a_m,b_1,\dots,b_m)\in \z^m\times\z^m$ is the number of lattice points in $(a_1+\dots+a_m)P$ that project to $a$ by $pr_3$ and to $b$ by $pr_2$. We compute $g(a,b)$ for sufficiently many pairs $(a,b)$ to proceed with the algorithm.

Let $T$ be a tree with a corresponding polytope $P$ and a distinguished leaf $l$. Let $h$ be a function such that $h(a)$ for $a=(a_1,\dots,a_m)\in \z^m$ is equal to the number of points in the fiber of the projection corresponding to leaf $l$ of $(a_1+\dots+a_m)P$ onto $a$. We construct a new tree $T'$ by attaching a tripod to a chosen leaf of $T$. We call $T'$ a join of $T$ and the tripod. The chosen leaf of $T'$ will be one of the leaves of the attached tripod. As proved in \cite{BW}, \cite{SS}, \cite{mateusz}, \cite{sull} (depending on the model), the
polytope associated to a join of two trees is a fiber product of the polytopes associated to these trees. Thus we can calculate the function $h'$ for $T'$ by a following rule: $h'(a)=\sum_b g(a,b)h(b)$, where the sum is taken over all $b \in \z^m$ such that $g(a,b)\neq 0$.

This allows us to compute inductively the relative Hilbert polynomial. The last tripod could be attached in the same way. Then one obtains the Hilbert function from relative Hilbert functions simply by summing up over all possible projections. However, it is better to do the last step in a different way.

Suppose that as before we are given a tree $T$ with a distinguished leaf $l$ and a corresponding relative Hilbert function $h$. We compute the Hilbert function of the tree $T'$ that is a join of the tree $T$ and a tripod using the equality $h'(n)=\sum_a f(a)h(a)$, where $a=(a_1,\dots,a_m)$ and $\sum a_i=n$. The function $f$ is the basis for induction introduced above.

Thus, decomposing the \emph{snowflake} and the \emph{3-caterpillar} trees to joins of tripods, we can inductively compute (a few small values of) the corresponding Hilbert functions. This method works also for non-normal models, if only the Hilbert function for the tripod can be computed.
In particular, for 2-Kimura model the computations turned out to be possible, because its polytope for the tripod is quite well understood (see \cite{mateusz}, 5.4), at least to describe fully its second dilation. This way we obtained the results of \ref{2kimura_poly}.

\newpage
\section*{Appendix}

Here we present the precise results of the computations of Hilbert-Ehrhart polynomials for a few models, stated in \ref{2kimura_poly}. For each of the first groups we considered the numbers of lattice points in consecutive dilations are given.

For the groups $\z_8$, $\z_2\times\z_2\times\z_2$ and $\z_9$ we computed only the Hilbert function and, as the first two are not normal and for the last one we could not check the normality, we do not know if it is equal to the Hilbert-Ehrhart polynomial.

\subsection*{Models for $G = \Z_3$}

\begin{center}
\begin{tabular}{c|l|l}
dilation & \emph{snowflake} & \emph{3-caterpillar} \\
\hline
1 & 243 & 243 \\
2 & 21627 & 21627 \\
3 & 903187 & 904069 \\
4 & 21451311 & 21496023 \\
5 & 330935625 & 331976637 \\
6 & 3647265274 & 3662146270 \\
7 & 30770591364 & 30920349834 \\
8 & 209116329075 & 210269891871 \\
9 & 1189466778457 & 1196661601837 \\
10 & 5831112858273 & 5868930577941 \\
11 & 25205348411361 & 25377886917819 \\
\end{tabular}
\end{center}

\subsection*{Models for $G=\Z_2\times \Z_2$ (3-Kimura)}

\begin{center}
\begin{tabular}{c|l|l}
dilation & \emph{snowflake} & \emph{3-caterpillar} \\
\hline
1 & 1024 & 1024 \\
2 & 396928 & 396928 \\
3 & 69248000 & 69324800 \\
4 & 5977866515 & 5990170739 \\
5 & 291069470720 & 291864710144 \\
6 & 8967198289920 & 8995715702784 \\
\end{tabular}
\end{center}

\subsection*{Models for $G=\Z_4$}

\begin{center}
\begin{tabular}{c|l|l}
dilation & \emph{snowflake} & \emph{3-caterpillar} \\
\hline
1 & 1024 & 1024 \\
2 & 396928 & 396928 \\
3 & 69248000 & 69324800 \\
4 & 6122557220 & 6138552524 \\
5 & 310273545216 & 311525688320 \\
6 & 10009786400352 & 10062179606880 \\
\end{tabular}
\end{center}

\subsection*{Models for $G=\Z_5$}

\begin{center}
\begin{tabular}{c|l|l}
dilation & \emph{snowflake} & \emph{3-caterpillar} \\
\hline
1 & 3125 & 3125 \\
2 & 3834375 & 3834375 \\
3 & 2229584375 & 2230596875 \\
4 & 640338121875 & 642089603125 \\
\end{tabular}
\end{center}

\subsection*{Models for $G=\Z_7$}
In this case the first three dilations of the polytopes have the same number of points. The numbers of points in fourth dilations were too big to obtain precise results. Hence we computed only the numbers of points mod 64, which is sufficient to prove that the Hilbert-Ehrhart polynomials are different.

\begin{center}
\begin{tabular}{c|l|l}
dilation & \emph{snowflake} & \emph{3-caterpillar} \\
\hline
1 & 16807 & 16807 \\
2 & 117195211 & 117195211 \\
3 & 423913952448 & 423913952448 \\
4 & $\equiv 54 \mod 64$ & $\equiv 14 \mod 64$ \\
\end{tabular}
\end{center}

\subsection*{Models for $G=\Z_8$}

\begin{center}
\begin{tabular}{c|l|l}
dilation & \emph{snowflake} & \emph{3-caterpillar} \\
\hline
1 & 32768 & 32768 \\
2 & 454397952 & 454397952 \\
3 & 3375180251136 & 3375013036032 \\
\end{tabular}
\end{center}

\subsection*{Models for $G=\Z_2\times \Z_2 \times \Z_2$}

\begin{center}
\begin{tabular}{c|l|l}
dilation & \emph{snowflake} & \emph{3-caterpillar} \\
\hline
1 & 32768 & 32768 \\
2 & 454397952 & 454397952 \\
3 & 3375180251136 & 3375013036032 \\
\end{tabular}
\end{center}

\subsection*{Models for $G=\Z_9$}

\begin{center}
\begin{tabular}{c|l|l}
dilation & \emph{snowflake} & \emph{3-caterpillar} \\
\hline
1 & 59049 & 59049 \\
2 & 1499667453 & 1499667453 \\
3 & 20938605820263 & 20937202945056 \\
\end{tabular}
\end{center}

\newpage
\bibliographystyle{plain}
\bibliography{xbib}

\end{document}